\newcommand{\dotieconcat}[2]{
  \text{\raisebox{.8ex}{$\smallfrown$}}%
}
\setlist[1]{labelindent=\parindent}
\setlist[enumerate,1]{label = \arabic*.,ref   = \arabic*}
\setlist[enumerate,2]{label = \alph*),ref   = \theenumi\alph*)}
\setlist[enumerate,3]{label = \roman*),ref   = \theenumii.\roman*}
\newcommand{\from}{\colon}
\newcommand{\implica}{\rightarrow}
\renewcommand{\phi}{\varphi}
\renewcommand{\epsilon}{\varepsilon}
\renewcommand{\models}{\vDash}
\newcommand{\proves}{\vdash}
\newcommand{\restr}{\upharpoonright}
\newcommand{\monster}{\mathfrak U}
\newcommand{\bla}[4]{{#1}_{#2}#3\ldots#3{#1}_{#4}}
\newcommand{\blah}[4]{{#1}^{#2}#3\ldots#3{#1}^{#4}}
\newcommand{\qfeq}{\equiv^{\mathrm{qf}}}
\newcommand{\then}{\Longrightarrow}
\newcommand{\subspaceweight}{w}
\newcommand{\lincomb}[1]{{#1}^*}
\def\Kvs{\textnormal{$K$-vs}}
\DeclareMathOperator{\qftp}{qftp}
\DeclareMathOperator{\supp}{supp}
\DeclareMathOperator{\axes}{axes}
\DeclarePairedDelimiter{\set}{\{}{\}}
\DeclarePairedDelimiter{\abs}{\lvert}{\rvert}
\DeclarePairedDelimiter{\strgen}{\langle}{\rangle}
\theoremstyle{definition}
\newtheorem{defin}{Definition}[section]
\newtheorem{thm}[defin]{Theorem}
\newtheorem{pr}[defin]{Proposition}
\newtheorem{co}[defin]{Corollary}
\newtheorem{lemma}[defin]{Lemma}
\newtheorem{notation}[defin]{Notation}
\newtheorem{eg}[defin]{Example}
\newtheorem{rem}[defin]{Remark}
\let\oldqed\qedsymbol
\newcommand{\qedclaim}{\mbox{$\underset{\textsc{claim}}{\square}$}}
\let\qedsymbol\oldqed
\title{Vector spaces with a union of independent subspaces}
\author{Alessandro Berarducci\,\orcidlink{0000-0002-9927-1147}}
\author{Marcello Mamino\,\orcidlink{0000-0001-9037-5903}}
\author{Rosario Mennuni\,\orcidlink{0000-0003-2282-680X}}
\address{Dipartimento di Matematica, Universit\`a di Pisa, Largo Bruno Pontecorvo 5, 56127 Pisa, Italy}
\email{alessandro.berarducci@unipi.it}
\email{marcello.mamino@unipi.it}
\email{R.Mennuni@posteo.net}
\thanks{Supported by the Italian research project PRIN 2017, ``Mathematical logic: models, sets, computability'', Prot. 2017NWTM8RPRIN}
\date{}
 \keywords{model theory, vector spaces, independent subspaces, stable theories, locally definable groups}
\subjclass[2020]{Primary: 03C10. Secondary: 03C45, 03C60.}
\begin{document}
\begin{abstract}Motivated by the theory of locally definable groups, we study the theory of $K$-vector spaces with a predicate for the union $X$ of an infinite family of independent subspaces. We show that if $K$ is infinite then the theory is complete and admits quantifier elimination in the language of $K$-vector spaces with predicates for the $n$-fold sums of $X$ with itself. If $K$ is finite this is no longer true, but we still have that a natural completion is near-model-complete.
\end{abstract}
\maketitle
\section{Introduction}
We study the theory $T_K$ of vector spaces over a field $K$ with a predicate $X$ for the union of an infinite family of independent subspaces.  

We show (\Cref{thm:qe}) that, if $K$ is infinite, then $T_K$ is complete and admits quantifier elimination in the expansion of the language of vector spaces by predicates for the $n$-fold sums $X^n$ of $X$ with itself. From this, we deduce its total transcendence (\Cref{co:omegastab}). 
We also investigate (\Cref{sec:finitefields}) the case of finite $K$, where  completeness fails and there are completions which do not eliminate quantifiers in the language described above, although they are still near-model-complete in the language of vector spaces together with a predicate for $X$ (\Cref{rem:nmc}).

We were led to the study of this theory while considering certain questions on locally definable groups which arose in \cite{eleftheriou_definable_2012,eleftheriou_lattices_2013,berarducci_discrete_2013}. For further details, see \Cref{sec:locdef}.

 \section{The theory}
 \begin{defin}
   Let $K$ be an infinite field. Let $L_K$ be the union of the language $L_\Kvs\coloneqq\set{+,0,\lambda\cdot\mid \lambda \in K}$ of $K$-vector spaces with the family of unary predicates $\set{X^n\mid n\in \omega}$. Let $T_K$ be the theory axiomatised as follows.
   \begin{enumerate}
   \item The reduct to $L_\Kvs$ is a $K$-vector space.
   \item\label{ax:usubspc} The predicate $X\coloneqq X^1$ is closed under multiplication by every $\lambda\in K$. In other words, it is a union of subspaces.
   \item The predicates $X^n$ are interpreted as the $n$-fold sums $\set{\bla x0+{n-1}\mid x_j\in X}$ of $X$ with itself, with the convention that $X^0=\set{0}$.
   \item\label{ax:parallel} The \emph{parallelism} relation $x\parallel y\coloneqq x+y\in X$ is an equivalence relation on $X\setminus \set 0$; we call the union of an equivalence class with $\set 0$ an \emph{axis}.
   \item There are infinitely many axes.
   \item The axes are linearly independent: if $\bla Y0,n$ are pairwise distinct axes and $a_i\in Y_i\setminus \set 0$, then $\bla a0,n$ are linearly independent.
   \end{enumerate}
 \end{defin}
 Observe that, by axioms \ref{ax:usubspc} and \ref{ax:parallel}, each axis is a subspace.
 \begin{rem}
If $V$ is a $K$-vector space and $\set{V_i\mid i\in I}$ is an infinite family of independent subspaces, then  $(V, \bigcup_{i\in I} V_i)$ is a model of $T_K$, where $X$ is interpreted as $\bigcup_{i\in I} V_i$.   Conversely, every model of $T_K$ is of this form, by setting $\set{V_i\mid i\in I}$ to be the family of the axes.
\end{rem}
\begin{proof}
Left to right: independence ensures that if $x,y,x+y\in \bigcup_{i\in I} V_i$, then there must be $i\in I$ such that $x,y,x+y\in V_i$. From this, it is easily inferred that $\parallel$ is an equivalence relation on $X\setminus \set 0$, and that the set of axes is precisely $\set{V_i\mid  i\in I}$; the remaining axioms are easily checked to hold. Right to left is immediate.
\end{proof}
\begin{eg}
The $K$-vector space $K^\omega$ may be turned into a model of $T_K$ by interpreting $X$ as the set of elements with support of size at most $1$; in other words, $f\in X$ iff there is at most one $n\in \omega$ such that $f(n)\ne 0$. The axes are precisely the sets $V_i=\set{f\in K^\omega\mid \forall j\in \omega\; f(j)\ne0\implica j=i}$. Observe that the subspace $\bigcup_{n\in \omega} X^n$ of finite support elements is the direct sum of the $V_i$, while $K^\omega$ is their direct product.
 \end{eg}

  \begin{notation}
We denote by $M$ a model of $T_K$.  For $A\subseteq M$, we will write $\mathcal F(A)$ for the intersection of $A$ with the $\bigvee$-definable subspace $\bigcup_{n\in \omega} X^n(M)$. The subspace generated by a subset $B$ of a vector space will be denoted by $\strgen{B}$. If $a$ is a tuple, for instance $a\in M^{n}$, we denote its coordinates by using subscripts, starting at $0$, so $a=(\bla a0,{n-1})$, and its length (in this example, $n$) by $\abs a$. In order to avoid confusion, we write indices of a sequence of tuples as superscripts, as in $\blah a0,\ell\in M^n$.
If $a$ is a tuple, by $\strgen{a}$ we mean $\strgen{\set{a_i\mid i<\abs a}}$. All tuples we consider have finite length. 
\end{notation}
 Observe that $X(M)$ is the union of the axes of $M$, and that each axis is definable with parameters. We sometimes write $X$, $X^n$ in place of $X(M)$, $X^n(M)$ if $M$ is clear from context.
 While theories of pure vector spaces are known to be strongly minimal, and in particular stable, expanding them by an arbitrary unary predicate may destroy stability, and in fact even result in a theory with the independence property, see~\cite{braunfeld_characterizations_2021}. Nevertheless, this is not the case for $T_K$, whose models are in fact easily classified.
 \begin{thm}\label{thm:countmodels}
For every ordinal $\alpha$ such that $\abs K\le \aleph_\alpha$, there are at most $2^{\aleph_0+\abs \alpha}$ models of $T_K$ of size $\aleph_\alpha$.
In particular, (every completion of) $T_K$ is superstable.
 \end{thm}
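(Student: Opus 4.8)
The plan is to classify the models of $T_K$ up to isomorphism by a short list of cardinal invariants, count those, and then invoke a standard dichotomy to deduce superstability. Fix a model $M$ and let $\set{V_i \mid i \in I}$ be its family of axes. By the parallelism axiom and the linear independence of the axes, $X(M) = \bigcup_{i\in I} V_i$, the sum $\sum_{i \in I} V_i$ is direct, and it equals the $\bigvee$-definable subspace $W \coloneqq \bigcup_{n\in\omega} X^n(M) = \mathcal F(M)$. Fixing a linear complement $C$ of $W$ in $M$, we obtain $M = \bigl(\bigoplus_{i\in I} V_i\bigr) \oplus C$. I would then show that $M$ is determined up to isomorphism by exactly two invariants: the \emph{dimension spectrum} $\sigma_M$, the function assigning to each nonzero cardinal $\kappa$ the number of axes of dimension $\kappa$, and the cardinal $\dim(M/W)=\dim C$.

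First I would establish this classification, which is the mathematical heart of the argument, albeit routine linear algebra. For the forward direction, any $L_K$-isomorphism $f\from M \to M'$ is $K$-linear and preserves $X=X^1$, hence preserves the relation $x \parallel y \coloneqq x + y \in X$; therefore it carries axes of $M$ bijectively onto axes of $M'$, preserving their dimensions, so $\sigma_M = \sigma_{M'}$. Being linear and $X$-preserving, $f$ also preserves each $X^n$ and hence maps $W$ onto $W'$, inducing an isomorphism $M/W \cong M'/W'$, so the complement dimensions agree. Conversely, from equal invariants I would build an isomorphism: the equality $\sigma_M=\sigma_{M'}$ yields a dimension-preserving bijection between the two families of axes, along which I choose linear isomorphisms of the matched axes; assembling them gives $g\from W \cong W'$ carrying $X(M)$ onto $X(M')$, and extending by an arbitrary linear isomorphism $h$ of the (equidimensional) complements yields $F=g\oplus h$. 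Since $F$ respects the decomposition, $F^{-1}(W')=W$, and as $X(M)\subseteq W$ and $X(M')\subseteq W'$ one checks $F^{-1}(X(M'))=X(M)$; thus $F$ preserves $X$, hence all $X^n$, and is an $L_K$-isomorphism.

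Next I would count. If $\abs{M} = \aleph_\alpha$, then each axis has dimension at most $\aleph_\alpha$, there are at most $\aleph_\alpha$ axes, and $\dim(M/W) \le \aleph_\alpha$, so both invariants range over objects bounded by $\aleph_\alpha$. The number of cardinals $\le \aleph_\alpha$ is $\aleph_0 + \abs{\alpha}$, so $\sigma_M$ is a function between two sets each of size at most $\aleph_0 + \abs{\alpha}$, and there are at most $(\aleph_0 + \abs{\alpha})^{\aleph_0 + \abs{\alpha}} = 2^{\aleph_0 + \abs{\alpha}}$ of these. Multiplying by the at most $\aleph_0+\abs{\alpha}$ choices for $\dim(M/W)$ leaves the bound $2^{\aleph_0 + \abs{\alpha}}$ on the number of isomorphism types of models of size $\aleph_\alpha$, as required.

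Finally, for superstability I would argue by contradiction using the classical fact (due to Shelah) that a complete theory which is not superstable has the maximal number $2^\lambda$ of pairwise non-isomorphic models in every sufficiently large cardinality $\lambda$. Applying this to any completion and comparing with the bound above, it suffices to produce arbitrarily large cardinals $\lambda = \aleph_\alpha$ with $2^{\aleph_0 + \abs{\alpha}} < 2^{\aleph_\alpha}$; strong limit cardinals $\aleph_\alpha$ that are not fixed points of the $\aleph$-function serve this purpose, since for such $\alpha$ the inequality $\aleph_0 + \abs{\alpha} < \aleph_\alpha$ together with strong-limitness gives $2^{\aleph_0 + \abs{\alpha}} < \aleph_\alpha \le 2^{\aleph_\alpha}$. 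The only external ingredient is the cited many-models theorem; the one point I would have to handle with care is precisely this choice of $\lambda$, namely passing to strong limit (rather than arbitrary) cardinals so that the continuum function genuinely separates $\aleph_0 + \abs{\alpha}$ from $\aleph_\alpha$, which is where a naive comparison could fail.
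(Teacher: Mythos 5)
Your classification and count are correct and coincide with the paper's argument: the invariants you isolate (the number of axes of each dimension together with $\dim(M/\mathcal F(M))$) are exactly the data the paper packages into a single function $f_M\from\omega+\alpha\to\omega+\alpha$, and the resulting bound $(\aleph_0+\abs{\alpha})^{\aleph_0+\abs{\alpha}}=2^{\aleph_0+\abs{\alpha}}$ is the same. The gap is in the final step, precisely the one you flagged as needing care. To run your comparison with the many-models theorem you need some cardinal $\lambda=\aleph_\alpha$ (large enough relative to $\abs{K}$) with $2^{\aleph_0+\abs{\alpha}}<2^{\aleph_\alpha}$, and you propose to find it among strong limit cardinals that are not fixed points of the $\aleph$-function. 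But ZFC does not prove that such cardinals exist, let alone arbitrarily large ones: it is consistent that \emph{every} strong limit cardinal is an $\aleph$-fixed point. Indeed, starting from a model of GCH, Easton forcing realises $2^\kappa=\aleph_{(\aleph_\kappa)^+}$ for every regular $\kappa$; in the extension, any strong limit $\lambda$ is closed under $\nu\mapsto\aleph_\nu$ (for $\nu<\lambda$ pick a regular $\mu$ with $\nu\le\mu<\lambda$, so that $\aleph_\nu\le\aleph_\mu\le 2^\mu<\lambda$), whence $\aleph_\lambda=\lambda$. In such a model of set theory your argument produces no witness, so the deduction of superstability does not go through as written.

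The conclusion is of course still correct, but it needs a different justification. Producing in ZFC even a single $\alpha$ with $2^{\aleph_0+\abs{\alpha}}<2^{\aleph_\alpha}$ is genuinely delicate: it amounts to a nontrivial statement about the continuum function across $\aleph$-fixed points (one is led to the Bukovsk\'y--Hechler theorem on powers of singular cardinals, and even then the existence of a provable witness is not obvious), so ``pick a convenient $\lambda$ and compare cardinalities'' is not a safe strategy here. The paper avoids the issue entirely by invoking the non-structure theorem in the precise form of \cite[Theorem~VIII.2.2]{shelah_classification_1990}, which yields superstability directly from the displayed bound on the number of models; if you want a self-contained argument you should either quote that statement exactly or supply the missing cardinal arithmetic, rather than rely on strong limit non-fixed-points.
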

 \begin{proof}
   Each $M\models T_K$ of size $\aleph_\alpha$ is determined up to isomorphism by $\operatorname{codim}(\mathcal F(M))$ and, for each positive cardinal $\kappa\le \aleph_\alpha$, the number of $\kappa$-dimensional axes of $M$.
   This information can be coded by a function from $\omega+\alpha$ to itself, and superstability follows by~\cite[Theorem~VIII.2.1]{shelah_classification_1990}.
\end{proof}

\section{Infinite fields}
In this section, $K$ is an infinite field.
We prove quantifier elimination for $T_K$, from which completeness and total transcendence will readily follow.
\begin{rem}
  If $a\in X^n\setminus X^{n-1}$,  there is a unique $n$-element set $\operatorname{supp}(a)\coloneqq\set{\bla m0,{n-1}}$ of pairwise nonparallel elements of $X\setminus\set 0$ with $a=\bla m0+{n-1}$. Call it the \emph{support} of $a$.
\end{rem}

 \begin{defin}
Fix $M\models T_K$.
\begin{enumerate}
\item The collection of axes of $M$ is denoted by  $\axes(M)$.
\item If $a\in \mathcal F(M)$ and $Y\in \axes(M)$, we define the \emph{projection} $\pi_Y(a)$ to be the unique element of $\supp(a)$ in $Y$ if one exists, and $0$ otherwise.
\item If $a\in \mathcal F(M)$, we define $\axes(a)$ to be $\set{Y\in \axes(M)\mid \pi_Y(a)\ne 0}$.
\item If $a\in \mathcal F(M)$,  we define the \emph{weight} $w(a)$ of $a$ to be the cardinality  $\abs{\axes(a)}=\abs{\supp(a)}$.
\item Similarly, for $A\subseteq \mathcal F(M)$, we define $\axes(A)$ to be the set $\bigcup_{a\in A} \axes(a) = \set{Y\in \axes(M)\mid \pi_Y(A)\ne \set{0}}$ and define the weight $\subspaceweight(A)$ of $A$ as the cardinality $\abs{\axes(A)}$. 
\item If $A\subseteq\mathcal F(M)$ we define $\hat A\coloneqq \strgen{\set{\pi_Y(A)\mid Y\in \axes(A)}}$. 
\end{enumerate}
\end{defin}

\begin{rem} \label{rem:fma} The following facts will be used without explicit mention. 
	\begin{enumerate}
		\item If $a\in X^n$ there are at most $n$ axes $Y$ with $\pi_Y(a)\ne 0$. In other words, $w(a)\le n$.
		\item For $a\in M$ and $\lambda \in K\setminus\set 0$, we have $\axes (a) = \axes (\lambda a)$. 
		\item We have $\axes(\lambda_0 a_0 + \ldots + \lambda_{n} a_{n}) \subseteq \bigcup_{i \leq n} \axes(a_i)$, and the equality holds if the supports of $\lambda_0 a_0, \ldots, \lambda_n a_{n}$ are disjoint.
		\item\label{point:disjsupp}   For every $a,b\in \mathcal F(M)\setminus\set 0$, there can be at most $w(b)$ many $\lambda\in K$ such that $\supp(a)\cap \supp(\lambda b)\ne \emptyset$. It follows that, if $K$ is infinite, $a\in \mathcal F (M)$, and $S\subseteq X$ is a finite set, then there is $\lambda \in K$ such that the support of $\lambda a$ is disjoint from $S$.  
		\item If $A_0, A_1$ are subspaces of $\mathcal F (M)$, then $\axes(A_0+A_1) = \axes(A_0) \cup \axes(A_1)$. 
                \item Clearly, $A\subseteq \hat A$ always holds.
                \item If $A\subseteq\mathcal F(M)$ is a subspace, then $\hat A=\bigoplus_{Y\in \axes(A)}\pi_Y(A)$.
	\end{enumerate}
\end{rem}

\begin{lemma}\label{lemma:usesKinfinite}Let $K$ be an infinite field.
 For every finite-dimensional subspace
    $A\subseteq \mathcal F(M)$ there is $a_\dagger\in A$ such that
    $\axes(A)=\axes(a_\dagger)$. In particular,
    $\subspaceweight(A)=\max_{a\in A} w(a)$, and $\subspaceweight(A)=n\in \omega$ if and only if $A\subseteq X^n$ and $A\centernot \subseteq X^{n-1}$.
\end{lemma}
\begin{proof}
  Since $K$ is infinite, point \ref{point:disjsupp} of \Cref{rem:fma} above tells us that 
for every $a,b\in A$ there is $\lambda\ne 0$ such that $\supp(a)\cap \supp(\lambda b)=\emptyset$, from which we deduce that $\axes(a-\lambda b)=\axes(a)\cup \axes(b)=\axes({\strgen{\set{a,b}}})$. 
The conclusion then follows by induction on $\dim A$.
\end{proof}
The assumption that $K$ is infinite cannot be removed, see \Cref{pr:finitefields}.

\begin{lemma}\label{lemma:findim}The following statements hold.
  \begin{enumerate}
  \item Every finite-dimensional subspace of  $\mathcal F(M)$  is contained in some $X^n(M)$.
  \item   If $A$ is a finite-dimensional subspace of $\mathcal F(M)$, then so is $\hat A$, and $\axes (A) = \axes (\hat{A})$. Moreover, for all axes $Y$, we have  $\pi_Y(A) = \pi_Y(\hat A)$. 
  \end{enumerate}
 \end{lemma}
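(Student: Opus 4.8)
The plan is to derive both parts from \Cref{rem:usesKinfinite} together with the independence of the axes, with essentially no new input. For part (1), I would fix a finite-dimensional $A\subseteq\mathcal F(M)$ and use \Cref{rem:usesKinfinite} to pick $a_*\in A$ with $\axes(A)=\axes(a_*)$. Since $a_*\in\mathcal F(M)$ lies in some $X^m(M)$, \Cref{rem:fma} bounds $w(a_*)\le m$, so $n\coloneqq w(A)=w(a_*)$ is finite. Then, for any $a\in A$, the implication $\pi_Y(a)\ne 0\then\pi_Y(A)\ne\set 0$ gives $\axes(a)\subseteq\axes(A)$, whence $w(a)\le n$; writing $a$ as the sum of its $w(a)$ pairwise nonparallel support elements exhibits $a\in X^{w(a)}(M)\subseteq X^n(M)$. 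Thus $A\subseteq X^n(M)$.

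For part (2), the first step would be to check that each $\pi_Y$ restricts to a $K$-linear map on $\mathcal F(M)$: independence of the axes yields a unique decomposition $a=\sum_{Y\in\axes(a)}\pi_Y(a)$ for every $a\in\mathcal F(M)$, and additivity and homogeneity of $\pi_Y$ follow because each axis is a subspace. Hence each $\pi_Y(A)$ is a subspace of $Y$ with $\dim\pi_Y(A)\le\dim A$, and $\pi_Y(A)=\set 0$ whenever $Y\notin\axes(A)$. By part (1) there are only $n=w(A)<\omega$ axes in $\axes(A)$, so
\[
  \hat A=\sum_{Y\in\axes(A)}\pi_Y(A)
\]
is a sum of at most $n$ subspaces, each of dimension at most $\dim A$. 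Independence of the axes makes this sum direct, giving $\dim\hat A\le n\cdot\dim A<\omega$.

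The whole argument is bookkeeping rather than insight: the one place to be careful is the linearity of $\pi_Y$ and the resulting directness of the sum defining $\hat A$, both of which rest on independence of the axes. The genuinely load-bearing fact is the finiteness of $w(A)$ for finite-dimensional $A$, which is precisely what \Cref{rem:usesKinfinite} provides for infinite $K$ and what would break down over a finite field.
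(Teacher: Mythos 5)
Your proof is correct, and part (2) matches the paper's argument (projections of a basis; at most $\subspaceweight(A)\cdot\dim A$ generators). Part (1), however, takes a genuinely different route: you obtain the finiteness of $\axes(A)$ by invoking \Cref{rem:usesKinfinite} to produce a single witness $a_*$ with $\axes(a_*)=\axes(A)$, whereas the paper argues directly that $\axes(A)=\bigcup_i\axes(a_i)$ for a basis $(a_i)$ of $A$ (each $\axes(a_i)$ being finite by \Cref{rem:fma}, and the union identity holding because independence of the axes makes each $\pi_Y$ linear). The paper's route is more elementary and, importantly, does not use that $K$ is infinite, so the lemma remains true over finite fields; your detour through \Cref{rem:usesKinfinite} imports an unnecessary hypothesis. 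Relatedly, your closing remark is slightly off: what fails over a finite field (see \Cref{pr:finitefields}) is the existence of a single $a_*$ realising all of $\axes(A)$, i.e.\ the identity $\subspaceweight(A)=\max_{a\in A}w(a)$ --- not the finiteness of $\subspaceweight(A)$ for finite-dimensional $A\subseteq\mathcal F(M)$, which holds over any field and is the only fact this lemma actually needs.
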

 \begin{proof}
   If $A\subseteq M$ is a finite dimensional subspace, then $A$ has nonzero projection only on finitely many axes $Y_0, \ldots, Y_{n-1}$, and $\pi_{Y_i}(A)$ has finite dimension for each $i< n$, so also $\hat A = \bigoplus_{i< n} \pi_{Y_i}(A)$ has finite dimension. Moreover, $\pi_{Y_i}(A) \subseteq X$, so $A \subseteq \hat A \subseteq X^n$. The rest is clear. 
 \end{proof}
Below, we consider the bilinear map $\cdot\from K^n\times M^n\to M$ defined as $\lambda\cdot a\coloneqq \sum_{i< \abs a} \lambda_i a_i$.
 \begin{defin}
Let  $a=(\bla a0,{\abs a-1})$ be a tuple from $M$.
\begin{enumerate}
\item We set   $\lincomb{a}\from K^{\abs a}\to M$ to be the linear function $\lambda\mapsto \lambda\cdot a$. 
\item If $W\subseteq K^{\abs a}$, let $W\cdot a  \coloneqq \{\lambda \cdot a \mid \lambda \in W \} = a^*(W)$. 
\item If $\strgen{a}\subseteq \mathcal F(M)$, let $g_a$  be the function from the family of subspaces of $K^{\abs a}$ to $\omega$ with
  \[
    g_a(W)=\abs{\set{Y\in \axes(\strgen{a})\mid W=\ker \pi_Y\circ \lincomb{a}}}
  \]
\end{enumerate}
\end{defin}
Observe that, by definition, $g_a(W)$ is at most $\subspaceweight(\strgen{a})$. 

\begin{rem}
We have $W \subseteq \ker \pi_Y\circ \lincomb{a}$ if and only if $Y\not\in \axes (W \cdot a)$. If we also have, for all $\lambda \in K^{\abs a} \setminus W$, that $Y \in \axes (\lambda \cdot a)$,  then $W = \ker \pi_Y\circ \lincomb{a}$. Finally, note that if $Y \not\in \axes(\strgen{a})$, then $\ker \pi_Y\circ \lincomb{a} = K^{\abs a}$, so $\ker \pi_Y \circ \lincomb{a}$ is interesting only when $Y\in \axes(\strgen{a})$. 
\end{rem}

\begin{lemma}\label{lemma:gger}
  Suppose that $a$ is a tuple with $\strgen{a}\subseteq \mathcal F(M)$, and let $W$ be a subspace of $K^{\abs a}$. Then  $g_a(W)\ge r$ if and only, if for every finite set
  $\set{\blah \lambda 0,\ell}\subseteq K^{\abs a}\setminus W$, 
  \begin{equation}\label{eq:diffger}\tag{*}
    \abs*{\left(\bigcap_{i\le \ell} \axes(\lambda^i\cdot a)\right)\setminus \axes(W \cdot a)}\ge r
  \end{equation}
\end{lemma}
In other words,  \eqref{eq:diffger} says there are at least $r$ axes $\bla Y0,{r-1}$ such that for each $j<r$ we have $W\subseteq \ker (\pi_{Y_j}\circ \lincomb{a})$  and for every $i\le \ell$ we have $\lambda^i\notin \ker(\pi_{Y_j}\circ \lincomb{a})$.
\begin{proof}
  If $W=\ker \pi_Y\circ \lincomb{a}$ and $\lambda\in K^{\abs a}\setminus W$, then $Y$ 
  is an axis of $\lambda\cdot a$ but not of $W \cdot a$. It follows that $g_a(W)\ge r\then \eqref{eq:diffger}$. For the opposite implication we observe  
  that, since $\abs{\axes(\strgen{a})}$ is finite, there are $\lambda^0, \ldots, \lambda^{\ell} \in K^{\abs a} \setminus W$ such that $\bigcap_{i \leq \ell} \axes (\lambda^i \cdot a) = \bigcap_{\lambda \in K^{\abs a} \setminus W} \axes (\lambda \cdot a)$. It follows that 
   \eqref{eq:diffger} implies the existence of pairwise distinct axes $\bla Y0,{r-1}\notin \axes(W \cdot a)$ such that
  \(
Y_i\in\bigcap_{\lambda\in K^{\abs a}\setminus W} \axes(\lambda\cdot a)
  \),
hence such that $W= \ker \pi_{Y_i}\circ \lincomb{a}$.
\end{proof}

 \begin{lemma}\label{lemma:firme}
If $\strgen{a}\subseteq \mathcal F(M)$, then the quantifier-free type $\qftp(a)$ of~$a$ determines $g_a$.
 \end{lemma}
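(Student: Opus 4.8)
The plan is to reduce the statement, via \Cref{lemma:gger}, to showing that each cardinality appearing in \eqref{eq:diffger} can be read off from $\qftp(a)$; this in turn rests on the fact that the weight of a single fixed $K$-linear combination of the $a_i$ is encoded by the predicates $X^n$. Throughout, ``determined by $\qftp(a)$'' means ``having the same value (or truth value) for any two tuples with the same quantifier-free type''.

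First I would record the basic observation that, for a fixed $v\in K^{\abs a}$, the element $f_a(v)=\sum_i v_i a_i$ is a term in $a$ with fixed coefficients, so the membership $X^n(f_a(v))$ is an atomic, hence quantifier-free, condition on $a$. By the consequence of \Cref{rem:usesKinfinite} (namely, $w(b)=n$ iff $b\in X^n\setminus X^{n-1}$), the weight $w(f_a(v))=\abs{\axes(f_a(v))}$ depends only on $\qftp(a)$. More generally, for a finitely generated subspace $W=\strgen{f_a(v^{(1)}),\dots,f_a(v^{(k)})}\subseteq\strgen{a}$, the equivalence $w(W)\le n\iff W\subseteq X^n$ (again from \Cref{rem:usesKinfinite}) exhibits $w(W)=\abs{\axes(W)}$ as a conjunction, over all $v$ in the relevant subspace of $K^{\abs a}$, of the quantifier-free conditions $X^n(f_a(v))$; hence $w(W)$, too, depends only on $\qftp(a)$.

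The next step is to pass from cardinalities of single $\axes$-sets to cardinalities of their Boolean combinations. The key algebraic identity is that projections are linear, so $\axes(W_1)\cup\axes(W_2)=\axes(W_1+W_2)$; thus the cardinality of any \emph{union} of $\axes$-sets of subspaces of $\strgen{a}$ equals the weight of the corresponding sum, and is therefore determined by $\qftp(a)$ by the previous paragraph. Working inside the finite universe $\axes(\strgen{a})$, whose size $w(\strgen{a})$ is likewise determined, inclusion--exclusion expresses the cardinality of any intersection, and hence of any difference, of finitely many such $\axes$-sets as an integer combination of sizes of unions. Applying this to the sets $\axes(f_a(u_i))$ and $\axes(f_a(V))$ shows that, for each subspace $V$ and each fixed finite set $\set{u_0,\dots,u_\ell}\subseteq K^{\abs a}\setminus V$, the left-hand side of \eqref{eq:diffger} is determined by $\qftp(a)$.

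Finally I would conclude by \Cref{lemma:gger}: for fixed $V$ and $r$, the assertion $g_a(V)\ge r$ is equivalent to requiring that \eqref{eq:diffger} hold for \emph{every} finite $\set{u_0,\dots,u_\ell}\subseteq K^{\abs a}\setminus V$. Each such instance is determined by $\qftp(a)$, and an arbitrary conjunction of conditions each determined by $\qftp(a)$ is again determined by $\qftp(a)$; hence so is the value of $g_a(V)$ for every $V$, and thus $g_a$ itself. The only real obstacle is the middle step, translating the set-theoretic quantity \eqref{eq:diffger} into quantifier-free data, and its resolution hinges essentially on $K$ being infinite: it is precisely \Cref{rem:usesKinfinite} that allows the weight of a whole subspace to be detected by the $X^n$ predicates.
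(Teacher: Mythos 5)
Your proposal is correct and follows essentially the same route as the paper: read off $w(f_a(v))$ for fixed $v$ from the quantifier-free conditions $X^n(f_a(v))$, use \Cref{rem:usesKinfinite} to get the weight of whole subspaces, convert unions of $\axes$-sets into weights of sums via $\axes(f_a(U_0))\cup\axes(f_a(U_1))=\axes(f_a(\strgen{U_0U_1}))$, recover the intersection/difference in \eqref{eq:diffger} by inclusion--exclusion, and conclude with \Cref{lemma:gger}. No gaps.
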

 \begin{proof}
An inspection of the language $L_K$ shows that knowing $\qftp(a)$ amounts precisely to knowing which linear combinations $\lambda\cdot a$ lie in which $X^n$. Because we are assuming $\strgen a\subseteq \mathcal F(M)$, this is the same as knowing all weights $w(\lambda\cdot a)$.  Now let $W\subseteq K^{\abs a}$ be a subspace. Since $K$ is infinite, by \Cref{lemma:usesKinfinite} there is $\lambda\in W$ such that $w(W\cdot a)=w(\lambda\cdot a)$. If $a \qfeq b$, then $w(\lambda \cdot a) =  w(\lambda \cdot b) \leq w(W \cdot b)$, so $w(W\cdot a) \leq w(W\cdot b)$ and by symmetry $w(W \cdot a) = w(W \cdot b)$. We have thus proved that $\qftp(a)$ determines the function sending a subspace $W\subseteq K^{\abs a}$ to $\subspaceweight(W \cdot a)$.

 If $W_0, W_1$ are subspaces of $K^{\abs a}$, then  $\axes((W_0+W_1)\cdot a)=\axes(W_0 \cdot a)\cup \axes(W_1 \cdot a)$. Therefore, for every $W\subseteq K^{\abs a}$ and every $\lambda^0,\ldots,\lambda^\ell\in K^{\abs a}\setminus W$, we have 
\[
  \abs*{
\left(\bigcup_{i\le \ell} \axes(\lambda^i\cdot a)\right)\setminus \axes(W \cdot a)
}= \subspaceweight(\strgen{W\cup \set{\lambda^0,\ldots,\lambda^\ell}}\cdot a)-\subspaceweight(W \cdot a)
\]
 By using induction and the inclusion-exclusion principle, it follows that $\qftp(a)$ also determines, for every subspace $W\subseteq K^{\abs a}$, every finite $\set{\lambda^0,\ldots,\lambda^\ell}\subseteq K^{\abs a}\setminus W$,  and every $r\in \omega$, whether condition~\eqref{eq:diffger} in Lemma \ref{lemma:gger} holds or not. By the aforementioned lemma, this information in turn determines $g_a$.
\end{proof}

 \begin{pr}\label{pr:extendtohat}
 Let $a = (a_0, \ldots, a_{n-1})$ and $b = (b_0, \ldots, b_{n-1})$ be tuples of the same length from $M,N\models T_K$ respectively, and denote by $A$ and $B$ the respective generated subspaces. Assume $a\qfeq b$, $A\subseteq \mathcal F(M)$, and $B\subseteq \mathcal F(N)$. Then the map $a_i\mapsto b_i$ extends to an isomorphism $\hat A\to \hat B$.
 \end{pr}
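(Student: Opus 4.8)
The plan is to exploit the independence of the axes to write $\hat A$ as an internal direct sum indexed by $\axes(A)$, thereby reducing the construction of the isomorphism to a family of ``local'' isomorphisms, one for each axis, and then to use \Cref{lemma:firme} to match the axes of $A$ with those of $B$ in a way compatible with these local pictures.

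First I would record that, by independence of the axes together with the uniqueness of supports, each projection $\pi_Y$ is $K$-linear on $\mathcal F(M)$. Consequently $\pi_Y(A)=\strgen{\pi_Y(a_\ell)\mid \ell<\abs a}$ and, again by independence, $\hat A=\bigoplus_{Y\in\axes(A)}\pi_Y(A)$, a \emph{finite} direct sum by \Cref{lemma:findim}; the same holds for $\hat B$. Moreover, for $Y\in\axes(A)$ the linear relations among the generators $\pi_Y(a_\ell)$ are exactly the elements of $\ker(\pi_Y\circ f_a)$, since $\sum_\ell\lambda_\ell\pi_Y(a_\ell)=\pi_Y(f_a(\lambda))$. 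Thus the pair consisting of $\pi_Y(A)$ and the spanning tuple $(\pi_Y(a_\ell))_\ell$ is determined, up to the evident isomorphism, by the single subspace $\ker(\pi_Y\circ f_a)\subseteq K^{\abs a}$.

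Next, since $a\qfeq b$, \Cref{lemma:firme} gives $g_a=g_b$. Unwinding the definition of $g$, this says precisely that for every subspace $V\subseteq K^{\abs a}=K^{\abs b}$ the number of $Y\in\axes(A)$ with $\ker(\pi_Y\circ f_a)=V$ equals the number of $Z\in\axes(B)$ with $\ker(\pi_Z\circ f_b)=V$. As $w(A)$ and $w(B)$ are finite, both families are finite, so I may choose a bijection $\sigma\from\axes(A)\to\axes(B)$ with $\ker(\pi_Y\circ f_a)=\ker(\pi_{\sigma(Y)}\circ f_b)$ for every $Y$. By the previous paragraph, for each such $Y$ the assignment $\pi_Y(a_\ell)\mapsto\pi_{\sigma(Y)}(b_\ell)$ is a well-defined isomorphism $\phi_Y\from\pi_Y(A)\to\pi_{\sigma(Y)}(B)$.

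Finally I would set $\phi\coloneqq\bigoplus_{Y\in\axes(A)}\phi_Y\from\hat A\to\hat B$. This is a vector-space isomorphism, and it extends $a_\ell\mapsto b_\ell$, since $\phi(a_\ell)=\sum_Y\phi_Y(\pi_Y(a_\ell))=\sum_Y\pi_{\sigma(Y)}(b_\ell)=b_\ell$, using that $\sigma$ is a bijection onto $\axes(B)$. It then remains to verify that $\phi$ is an $L_K$-isomorphism, i.e.\ that it preserves each predicate $X^n$: as each $\phi_Y$ is injective and lands in the single axis $\sigma(Y)$, the map $\phi$ preserves weight, and for elements of $\mathcal F$ membership in $X^n$ is equivalent to having weight at most $n$ (the consequence of \Cref{rem:usesKinfinite} recorded above), whence preservation of $X^n$ is immediate. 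The crux of the argument—and the only step demanding real care—is the reduction itself: one must see that $g_a$ records exactly the multiset of kernels $\ker(\pi_Y\circ f_a)$, so that $g_a=g_b$ yields the axis-matching $\sigma$, and that the direct-sum decomposition renders the global compatibility with $a_\ell\mapsto b_\ell$ automatic once $\sigma$ is fixed.
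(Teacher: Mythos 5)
Your proof is correct and follows essentially the same route as the paper's: decompose $\hat A$ as the finite direct sum $\bigoplus_{Y\in\axes(A)}\pi_Y(A)$, use \Cref{lemma:firme} to obtain a kernel-preserving bijection $\sigma\from\axes(A)\to\axes(B)$, and glue the resulting well-defined local isomorphisms. The only addition is your explicit check that the predicates $X^n$ are preserved, which the paper leaves implicit; that verification is fine.
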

 \begin{proof}
   Since $a\qfeq b$, by \Cref{lemma:usesKinfinite} we have $\subspaceweight(A)=\subspaceweight(B)$.  Observe that
   \[
     \hat A\coloneqq\strgen{\set{\pi_{Y}(\lambda \cdot a)\mid Y\in \axes(A), \lambda\in K^{\abs a}}}
   \]
   and similarly for $\hat B$.    By \Cref{lemma:firme} we have $g_a=g_b$, hence for each subspace $W$ of $K^{\abs a}$ there is a bijection between the set of axes $Y$ of $A$ with $\ker \pi_Y \circ \lincomb{a} = W$ and the set of axes $Z$ of $B$ with $\ker \pi_Z \circ \lincomb{b} = W$. 
Putting together these bijections as $W$ varies, we obtain a bijection $\sigma\from \axes(A)\to \axes(B)$ 
with $\ker \pi_Y \circ \lincomb{a} = \ker \pi_{\sigma(Y)} \circ \lincomb{b}$. 
This implies that the map $\pi_{Y}(\lambda \cdot a)\mapsto \pi_{\sigma(Y)}(\lambda \cdot b)$ is well-defined; let $h\from \hat A\to \hat B$ be its  linear extension. By construction, each restriction $h\restr \pi_Y(\hat A)$  is an isomorphism onto $\pi_{\sigma(Y)}(\hat B)$, and since $\hat A=\bigoplus_{Y\in \axes(A)} \pi_Y(\hat A)$ and $\hat B=\bigoplus_{Y\in \axes(A)}\pi_{\sigma(Y)}(\hat B)$, the map $h$ is an isomorphism. Moreover, each coordinate $a_i$ of $a$ is the sum of its projections on the axes of $A$, and analogously for $B$. Let us also observe that if $\lambda \in K^{\abs a}$ is the $i$-th vector of the standard base, then $a_i = \lambda \cdot a$ and $b_i = \lambda \cdot b$. Hence $h$ extends $a_i\mapsto b_i$ and we are done.
 \end{proof}
 \begin{lemma}\label{lemma:ida}
   In every $\omega$-saturated $N\models T_K$, every axis is infinite-dimensional, and the codimension of  $\mathcal F(N)$ is infinite.
 \end{lemma}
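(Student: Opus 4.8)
The plan is to prove both statements by realizing suitable partial types over finite parameter sets, exploiting that $N$ is $\omega$-saturated and that $K$ is infinite.

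For (1), I fix an axis $Y$ and recall that, being a nonzero subspace of a vector space over the infinite field $K$, the set $Y$ is infinite. I then argue by contradiction: suppose $\dim Y = d < \omega$ and fix a basis $a_1, \dots, a_d$ of $Y$, so that $Y = \strgen{a_1, \dots, a_d}$ and each $a_i \in X \setminus \set 0$. Consider the partial type over $\set{a_1, \dots, a_d}$
\[
  p(x) \coloneqq \set{X(x),\ X(x + a_1)} \cup \set*{x \neq \textstyle\sum_{i=1}^d \lambda_i a_i \mid (\lambda_i)_{i} \in K^d}.
\]
The first two formulas force $x$ to lie on the axis $Y$, while the remaining ones force $x \notin \strgen{a_1, \dots, a_d} = Y$. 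Any finite subset of $p$ merely excludes finitely many elements of $Y$, and since $Y$ is infinite it is satisfiable; by $\omega$-saturation $p$ is then realized, which is absurd. Hence no axis is finite-dimensional.

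For (2), I first produce a single element outside $\mathcal F(N)$ and then iterate. Since there are infinitely many linearly independent axes, choosing nonzero elements on $n+1$ distinct axes and summing them yields an element of weight exactly $n+1$; hence $X^n(N) \neq N$ for every $n$, so the type $\set{\neg X^m(x) \mid m \in \omega}$ is finitely satisfiable and, by $\omega$-saturation, realized by some $c_1 \notin \mathcal F(N)$. Suppose inductively that $c_1, \dots, c_k$ are linearly independent modulo $\mathcal F(N)$, so that $U \coloneqq \strgen{c_1, \dots, c_k}$ meets $\mathcal F(N)$ only in $0$, and consider the partial type over $\set{c_1, \dots, c_k}$
\[
  q(x) \coloneqq \set*{\neg X^m(x - \textstyle\sum_{i=1}^k \lambda_i c_i) \mid (\lambda_i)_{i} \in K^k,\ m \in \omega}.
\]
A realization $c_{k+1}$ of $q$ satisfies $c_{k+1} - u \notin \mathcal F(N)$ for every $u \in U$, i.e.\ $c_1, \dots, c_{k+1}$ are linearly independent modulo $\mathcal F(N)$; iterating gives $\operatorname{codim} \mathcal F(N) = \infty$.

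The step I expect to be the crux is the finite satisfiability of $q$. A finite subset of $q$ involves finitely many pairs $(v_j, m_j)$ with $v_j \in U$; setting $M \coloneqq \max_j m_j$, it suffices to find $x$ with $x \notin v_j + X^M$ for all $j$. I take $w \in \mathcal F(N)$ of weight greater than $M$ (again a sum of nonzero elements on $M+1$ distinct axes) and claim $x \coloneqq w$ works: if $v_j = 0$ this is just $w \notin X^M$, while if $v_j \neq 0$ then $v_j \notin \mathcal F(N)$, since $U \cap \mathcal F(N) = \set 0$, whence $w - v_j \notin \mathcal F(N)$ and in particular $w - v_j \notin X^M$. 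Thus no finite union of translates $v_j + X^M$ covers $N$, so $q$ is finitely satisfiable, and $\omega$-saturation finishes the argument.
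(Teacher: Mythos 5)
Your proof is correct and is exactly the argument the paper has in mind: the paper's proof is a one-line appeal to ``easy compactness arguments, the first one using that every axis is infinite, and the second one using that there are infinitely many axes,'' and your two types $p$ and $q$ spell out precisely those arguments. The only remark worth making is that your verification of finite satisfiability of $q$ (via elements of large weight and the fact that $\mathcal F(N)$ is a subspace) is the genuinely nontrivial step, and you handle it correctly.
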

 \begin{proof}
   Both statements are proven by easy compactness arguments, the first one using that every axis is infinite, and the second one using that there are infinitely many axes.
 \end{proof}
 \begin{thm}\label{thm:qe}
For every infinite  $K$, the theory $T_K$ eliminates quantifiers in $L_K$ and is complete.
 \end{thm}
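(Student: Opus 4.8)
The plan is to prove quantifier elimination by the standard back-and-forth criterion and then read off completeness. Fix $\omega$-saturated $M,N\models T_K$; by the usual test it suffices to show that any $L_K$-embedding $f\from A\to N$ of a finite-dimensional subspace $A\le M$ extends, for every $c\in M$, to an embedding of a substructure of $M$ containing $c$. Throughout I use that an $L_K$-embedding is exactly a linear map preserving every predicate $X^n$, so (writing $B\coloneqq f(A)$, $A_0\coloneqq A\cap\mathcal F(M)$, $B_0\coloneqq B\cap\mathcal F(N)$) it satisfies $f(A_0)=B_0$, and by \Cref{pr:extendtohat} applied to enumerations of $A_0$ and $B_0$ it respects the decomposition of $\mathcal F$ into axes once those parts are closed under projections.

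First move (closing up the finite part). I would begin by replacing $A$ with $A+\widehat{A_0}$. Since enumerations of $A_0$ and $B_0=f(A_0)$ are $\qfeq$, \Cref{pr:extendtohat} extends $f\restr A_0$ to an isomorphism $\widehat{A_0}\to\widehat{B_0}$ matching projections along a bijection $\sigma\from\axes(A_0)\to\axes(B_0)$; gluing this with $f$ over the common subspace $A_0$ yields an embedding whose $\mathcal F$-part is hatted. Hence I may assume $A=A_0\oplus A_1$ with $A_0=\widehat{A_0}=\bigoplus_{Y\in\axes(A_0)}(A_0\cap Y)$ and $A_1\cap\mathcal F(M)=\set 0$, and likewise for $B$, with $f$ respecting axes on $A_0$.

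Extension step. I would split according to $c$. If $c\in\mathcal F(M)$, then $c=\sum_{Y\in\axes(c)}\pi_Y(c)$, so adding the projections one at a time (re-hatting being automatic) reduces to the case $c\in Y\setminus\set 0$ for a single axis $Y$. When $Y\in\axes(A_0)$ and $c\notin A_0$, I use \Cref{lemma:ida} to pick $c'$ in the infinite-dimensional axis $\sigma(Y)$ linearly independent from $B_0\cap\sigma(Y)$, and set $f(c)=c'$; when $Y\notin\axes(A_0)$ I pick a fresh axis $Y'$ of $N$ (there are infinitely many, and $\axes(B_0)$ is finite) and any nonzero $c'\in Y'$. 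If instead $c\notin\mathcal F(M)$, then either $c\in A+\mathcal F(M)$, which writes $c=a+d$ with $d\in\mathcal F(M)$ and reduces to the previous case, or $c$ is linearly independent from $A$ modulo $\mathcal F(M)$; in the latter case \Cref{lemma:ida} (infinite codimension of $\mathcal F(N)$) provides $c'\in N$ independent from $B$ modulo $\mathcal F(N)$, and I set $f(c)=c'$. In every case I check that $f\cup\set{c\mapsto c'}$ preserves each $X^n$: a linear combination of $c$ with elements of $A$ either lies outside $\mathcal F(M)$, in which case by the choice of $c'$ its image lies outside $\mathcal F(N)$ and both sides fail every $X^n$, or it lies in $\mathcal F(M)$, in which case its weight is computed from the projections $\pi_Z$, which agree on the two sides by construction; by \Cref{rem:usesKinfinite} that weight is the least $n$ with membership in $X^n$, so membership matches.

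Completeness and the obstacle. Completeness is immediate from quantifier elimination, since $\set 0$ is a substructure of every model with a single quantifier-free type, so all models are elementarily equivalent. The step I expect to be the crux is the single-axis extension with $Y\in\axes(A_0)$: one must be sure that choosing $c'$ merely linearly independent from $B_0$ inside the infinite-dimensional axis $\sigma(Y)$ reproduces \emph{all} the weights $w(\lambda c+a)$ at once. This is exactly where the preparatory results do their work: \Cref{pr:extendtohat} and \Cref{lemma:firme} guarantee that $f$ already respects the axis decomposition and the kernels $\ker\pi_Y\circ f_a$, so the only remaining freedom is the dimension inside each axis, and \Cref{lemma:ida} supplies enough of it in the saturated model.
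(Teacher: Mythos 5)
Your proposal is correct and follows essentially the same route as the paper's proof: a back-and-forth between $\omega$-saturated models, using \Cref{pr:extendtohat} to close the finite part under projections, \Cref{lemma:ida} for infinite-dimensional axes and infinite codimension of $\mathcal F(N)$, and the existence of infinitely many axes for the fresh-axis case. The only (immaterial) difference is that you add the projections of $c$ one axis at a time, while the paper adds the whole support of $a$ in a single step.
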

 \begin{proof}
  The  vector space $\set{0}$, with $0$ in every $X^n$, is a substructure  embedding in every  $M\models T_K$, hence completeness is a consequence of quantifier elimination and it suffices to prove the latter. To this end, we show that if $M, N\models T_K$ are  $\omega$-saturated, then the family of partial isomorphisms between finitely generated substructures $A$ of $M$ and $B$ of $N$ has the back-and-forth property. 
 
  Let $f\from A\to B$ be a partial isomorphism as above, and let $a\in M\setminus A$. We need to extend $f$ to an isomorphism of finitely generated substructures which includes $a$ in its domain. 
  
  To this aim fix a complement $\mathcal I(A)$ of $\mathcal F(A)$ in $A$, observe that $\mathcal I(A)\cap \mathcal F(M)=\set 0$, and let $\mathcal I (M)$ be a complement of $\mathcal F (M)$ which includes $\mathcal I (A)$. Then let $\mathcal I(B)\coloneqq f(\mathcal I(A))$ and let $\mathcal I (N)$ be a complement of $\mathcal F (N)$ which includes $\mathcal I (B)$. 
  
  It suffices to deal with the cases $a \in \mathcal I (M)$ and $a \in \mathcal F (M)$. 
  
 Suppose first that $a\in \mathcal I (M)$. By \Cref{lemma:ida} the codimension of $\mathcal F (N)$ in $N$ is infinite, so we can choose $b\in \mathcal I (N) \setminus \mathcal I(B)$ and extend $f:A\to B$ to a linear map which sends $a$ to $b$. This yields the required isomorphism since $\mathcal F (\strgen {A \cup \{a\}}) = \mathcal F (A)$ and $\mathcal F (\strgen {B \cup \{b\}}) = \mathcal F (B)$. 
 
 Now suppose that $a\in \mathcal F (M)$.  By \Cref{pr:extendtohat}, we may extend $f$ to an isomorphism
  \[
  \hat f\from \widehat{\mathcal F(A)}\oplus \mathcal I(A)\to \widehat{\mathcal F(B)}\oplus \mathcal I(B)
  \]
  so we can assume that $a\in \mathcal F (M) \setminus \widehat {\mathcal F (A)}$. 
   Write $a$ as the sum of the elements of its support, that is $a=\sum_{i<w(a)} m_i$, with each $m_i\in X(M)$ and $m_i\centernot\parallel m_j$ if $i\ne j$. It is enough to extend the isomorphism $\hat f$ to the elements $m_i$, so we may assume that $a \in X(M)$ and $a\not \in \widehat{\mathcal F (A)}$.  Since $a\in X(M)\setminus \set 0$, the set $\axes(a)$ has a unique element; call it $Y$.  We distinguish two cases.
  
Suppose $Y \in \axes (\mathcal F (A))$. Then there is $c\in X(M) \cap \widehat{\mathcal F (A)}$ such that $a\parallel c$. By \Cref{lemma:ida} each axis of $N$ has infinite dimension, so we may find  $b \in X(N)\setminus \widehat{\mathcal F(B)}$ such that $b \parallel \hat f(c)$ and we extend the isomorphism by sending $a$ to $b$. 

Suppose now that $Y\not \in \axes(\mathcal F (A))$. The axioms of $T_K$ ensure that $N$ has infinitely many axes, hence we may find  $b \in X(N)$ such that $b\not\in \axes(\mathcal F (B))$ and we extend the isomorphism by sending $a$ to $b$. 
\end{proof}

\begin{co}\label{co:omegastab}
  For every infinite field $K$, the theory $T_K$ is totally transcendental.
\end{co}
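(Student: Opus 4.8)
The plan is to deduce $\omega$-stability from the quantifier elimination of \Cref{thm:qe} by a type-counting argument, the decisive structural input being that every element of $\mathcal F$ has finite weight and hence interacts nontrivially with only finitely many axes. Recall that a complete theory $T$ is $\omega$-stable — here in the sense of total transcendentality, which for countable $K$ reduces to the usual notion — if and only if $\abs{S_1(A)} \le \abs A + \abs T$ for every parameter set $A$; since $\abs{T_K} = \abs K$, I would fix a monster model and a set $A$ and aim to bound the number of $1$-types over $A$ by $\abs A + \abs K$.

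First I would reduce to quantifier-free types over a subspace. By \Cref{thm:qe} a type over $A$ is determined by its quantifier-free part, hence by the quantifier-free type over the $K$-subspace spanned by $A$; replacing $A$ by this subspace changes its cardinality by at most $\abs K$, so it suffices to bound the number of quantifier-free $1$-types over a subspace $B$ with $\abs B \le \abs A + \abs K$. Moreover, by \Cref{pr:extendtohat} every such quantifier-free type extends to one over the hull $\hat B = \strgen{\pi_Y(B) \mid Y \in \axes(B)}$, which still has cardinality at most $\abs A + \abs K$; as the restriction map $S_1(\hat B) \to S_1(B)$ is surjective, it is enough to count quantifier-free $1$-types over $\hat B$. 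The advantage of passing to the hull is that $\hat B = \bigoplus_{Y \in \axes(B)} \pi_Y(B)$ splits as a direct sum along the axes, so that the quantifier-free data of a finite-support element can be read off axis by axis.

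The count then proceeds by cases on the weight of a realisation $c$. If $c \in \mathcal F$ has finite weight, then it has only finitely many nonzero projections, so its quantifier-free type over $\hat B$ is determined by a choice of these finitely many axes — either among the at most $\abs B$ axes in $\axes(B)$, or as fresh axes, which are mutually interchangeable by \Cref{lemma:ida} and the infinitude of the family of axes — together with, for each chosen $Y \in \axes(B)$, the type of $\pi_Y(c)$ over $\pi_Y(B)$ computed inside $Y$. But inside a single axis the induced structure is that of a pure $K$-vector space, whose $1$-types over a subspace consist of the realised ones and a single generic, so there are at most $\abs B + 1$ of them. Summing over the at most $(\abs A + \abs K)^{<\omega} = \abs A + \abs K$ finite configurations yields at most $\abs A + \abs K$ such types. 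If instead $c \notin \mathcal F$, I would pass to the quotient by $\mathcal F$, which is again a pure $K$-vector space: if the image of $c$ is generic over the image of $B$, then no nonzero $K$-multiple of $c$ lies in any coset $b + \mathcal F$ with $b \in B$, and the quantifier-free type of $c$ is the unique such generic type; otherwise $c - b \in \mathcal F$ for some $b \in B$, and subtracting this parameter reduces to the finite-weight case already counted. This contributes at most $\abs A + \abs K$ further types.

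The main obstacle, and the crux of the argument, is precisely the bound in the finite-weight case: a priori the infinitely many independent axes might let a single element carry infinitely many independent pieces of information, which is exactly the mechanism by which superstable theories such as that of infinitely many cross-cutting predicates fail to be $\omega$-stable. What rules this out here is the finiteness of support — each element of $\mathcal F$ meets only finitely many axes, by \Cref{rem:usesKinfinite} and \Cref{rem:fma} — so no binary tree of conditions on a single variable can be assembled, and the per-axis analysis over the hull $\hat B$ genuinely localises the quantifier-free type to a finite amount of data. Once $\abs{S_1(A)} \le \abs A + \abs K$ is established for every $A$, total transcendentality, and hence $\omega$-stability, follows.
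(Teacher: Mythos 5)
Your proof is correct in substance, but it takes a genuinely different route from the paper's, and the comparison is instructive. The paper counts $S_1(M)$ for a countable model $M$: after disposing of the realised types and the unique type omitting every coset of every $X^n$, it translates so that $n$ with $p(x)\proves x\in X^n$ is minimal, writes a realisation as the sum $a=\bla a0+{n-1}$ of its support, asserts that $X$ is strongly minimal with a unique nonrealised type $p_X$, and concludes that each $a_i\models p_X$, giving one type per $n$. You instead work over an arbitrary parameter set, pass to the hull, and count the quantifier-free type of $c\in\mathcal F$ axis by axis: finitely many axes meet $\supp(c)$, each either fresh (and interchangeable) or in $\axes(B)$, in which case the relevant datum is the type of $\pi_Y(c)$ over $\pi_Y(B)$ inside the axis $Y$, where the induced structure really is that of a pure vector space. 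This finer bookkeeping buys you two things. First, it correctly accounts for types whose support meets axes already named by the parameters: $X$ itself is \emph{not} strongly minimal --- for $m\in X(M)\setminus\set{0}$ the axis of $m$ is the infinite, co-infinite definable subset $\set{x\in X\mid x+m\in X}$ of $X$ --- so over a model there are countably many nonrealised types concentrating on $X$ (one generic per axis of $M$, plus the fresh-axis type), and correspondingly more than one type with a given minimal $n$; your per-axis analysis handles exactly this case, which the paper's uniqueness claim elides (countability, and hence the corollary, of course survives either way). Second, the bound $\abs{S_1(A)}\le\abs A+\abs K$ over arbitrary $A$ is the right formulation of $\omega$-stability when $K$ is uncountable, where ``countable model'' is vacuous. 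Two small points to tidy up: the hull is only defined for subspaces of $\mathcal F$, so you should split $B$ as $\mathcal F(B)$ plus a complement and take $\widehat{\mathcal F(B)}$ (this meshes with your separate treatment of $c\notin\mathcal F$); and \Cref{pr:extendtohat} concerns extending partial isomorphisms, not types --- surjectivity of the restriction map $S_1(\hat B)\to S_1(B)$ is automatic and needs no citation.
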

\begin{proof}
  It is well-known that a theory is totally transcendental if and only if each of its reducts to a countable sublanguage is $\omega$-stable. Therefore, it suffices to prove that, if $K$ is countably infinite and $M\models T_K$ is countable, then so is $S_1(M)$. By quantifier elimination and the axioms of $T_K$, every $p(x)\in S_1(M)$ is determined by boolean combinations of formulas of the form  or $x-m\in X^n$, for $m\in M$ and $n\in \omega$ (when $n=0$, this is the same as $x-m=0$).

  Clearly, there is a unique $1$-type extending $\set{x-m\notin X^n\mid n\in \omega, m\in M}$. It is therefore enough to count those $p(x)$ containing, for some $n\in \omega$ and $m\in M$,  the formula $x-m\in X^n$.   Since each $h_m\coloneqq y\mapsto y-m$ is a definable bijection, and because there are only countably many $h_m$, up to translating we may assume that $p(x)\proves x\in X^n$ and that $n$ is minimal such, that is, for all $m\in M$ and $n_0<n$ we have $p(x)\proves x-m\notin X^{n_0}$. In other words, it suffices to show that, for each $n\in \omega$, there are only countably many types $p(x)$ such that $p(x)\proves \set{x\in X^n}\cup\set{x-m\notin X^{n_0}\mid n_0<n}$.

  Let $p$, $n$ be as above. If $n=0$, then  $p(x)\proves x=0$.  For $n=1$, we argue as follows.  Let $m\in X(M)\setminus\set0$, and let $Y$ be the corresponding axis; view it as a definable set,  defined by $(x\in X)\land (x+m\in X)$. Again by quantifier elimination, the structure induced on $Y$ is that of a pure $K$-vector space, and $Y$ is stably embedded because $T_K$ is stable by \Cref{thm:countmodels}. Hence, $Y$ is a strongly minimal set, and there is a unique nonrealised type concentrating on $Y$, call it $p_Y$. Since $M$ is countable, there are only countably many axes in $M$, hence only countably many such $p_Y$. Now, for a type $p(x)$ satisfying the assumptions above with $n=1$, there are only two possibilities: either there is $Y\in \axes(M)$ such that $p=p_Y$, or $p$ is the uniquely determined type of an element of  $X$ in a new axis; more precisely, this is the unique type as above which furthermore satisfies $p(x)\proves \set{x+m\notin X\mid m\in X(M)}$; call it $p_{\mathrm{na}}$.

  For $n\ge 2$,  in some monster model $\monster\models T_K$, fix $a\models p$, and write $a=\bla a0+{n-1}$ as the sum of the elements in its support. Minimality of $n$ implies that, for each $i<n$, either there is $Y_i\in \axes(M)$ such that $a_i\models p_{Y_i}$, or  $a_i\models p_{\mathrm{na}}$. Up to a permutation of the coordinates we may assume that, for a suitable $k<n$,  the first case happens for $i<k$, and the second one for $k\le i <n$. Again by quantifier elimination, there is a unique $n$-type $q(y)$ such that $q(y)\proves \bigcup_{i<k} p_{Y_i}(y_i) \cup  \bigcup_{k\le i<n} p_{\mathrm{na}}(y_i) \cup    \set*{\bigwedge_{k\le i<j<n} y_i\centernot\parallel y_j}$.
Because there are only countably many axes in $M$, there are only countably many choices for such a $q(y)$. Since $p(x)$ is the pushforward of $q(y)$ along the definable function $y\mapsto \sum_{i<n}y_i$, it is uniquely determined by $q$.
\end{proof}

\section{The finite field case}\label{sec:finitefields}
If $K$ is finite, then the number of axes of a given dimension becomes first-order expressible, hence $T_K$ is incomplete. Since there is a structure that embeds in every model of $T_K$, namely the vector space  $\set{0}$ with $0$ belonging to every $X^n$, it follows that $T_K$ cannot eliminate quantifiers in $L_K$. A natural completion to consider is $T_{K,\infty}$, obtained by requiring that every axis is infinite-dimensional. Even $T_{K,\infty}$ does not eliminate  quantifiers in $L_K$, since the conclusion of \Cref{lemma:usesKinfinite} fails, as shown below. 
 \begin{pr}\label{pr:finitefields}
If $K$ is finite and $M\models T_{K,\infty}$, there are $a,b\in M^2$ with $a\qfeq b$, with both subspaces $\strgen{a}$ and $\strgen{b}$ included in $X^{\abs K}(M)$, but with  $\subspaceweight(\strgen{a})=\abs K<\abs K+1= \subspaceweight(\strgen{b})$.
\end{pr}
\begin{proof}
  For every $i<\abs K$, choose  linearly independent  $m_i\parallel m_i'$ in $X$ in such a way that if $i\ne j$ then $m_i\centernot\parallel m_j$.  Choose $m\in X$ with $m\centernot\parallel m_i$, fix an enumeration $(\lambda_i)_{1\le i\le \abs K-1}$ of $K\setminus \set 0$, and set
\[
    a_0\coloneqq \sum_{i< \abs K}m_i   \qquad  a_1\coloneqq \sum_{i< \abs K}m_i'\qquad
    b_0\coloneqq a_0  \qquad   b_1\coloneqq m+\sum_{1\le i< \abs K-1}\lambda_i m_i
  \]
Let $a=(a_0, a_1)$ and $b=(b_0, b_1)$. By definition, for $x\in \set{a,b}$, we have $w(x_0)=w(x_1)=\abs K$. Moreover, an easy computation shows that, for every $\lambda,\mu\in K\setminus \set 0$, we have $w(\lambda x_0+\mu x_1)=\abs K$, and it follows that $\strgen a, \strgen b\subseteq X^{\abs K}$, hence that $a\qfeq b$. Nevertheless, by construction $\axes(\strgen a)=\set{\axes(m_i)\mid i<\abs K}$, while $\axes(\strgen b)=\axes(\strgen a)\cup \axes(m)$, and since $m$ and the $m_i$ are all in $X$ we have $\subspaceweight(\strgen{a})=\abs K<\abs K+1=\subspaceweight(\strgen{b})$.
\end{proof}
\begin{rem}\label{rem:nmc}
For finite $K$, the proof of  \Cref{thm:qe} may be adapted to prove quantifier elimination for $T_{K, \infty}$ in the expansion of $L_K$ by predicates $P_{k,n,i}(\bla a0,k)$ such that, if $A\coloneqq\strgen{\bla a0,k}\subseteq X^n$, then $P_{k,n,i}$ codes the isomorphism type of $\hat A$ (with $i$ ranging over the possible isomorphism types). These predicates are $\exists$-definable in $L_K$, and in fact already in $L_\Kvs\cup \set X$ hence, in this language, $T_{K, \infty}$ is near-model-complete, that is, every formula is equivalent to a boolean combination of existential ones.
\end{rem}

\section{Locally definable groups}\label{sec:locdef}

The original motivation for considering the theory $T_K$ comes from the following question in \cite{eleftheriou_definable_2012,eleftheriou_lattices_2013}. 
Let $G$ be a locally definable connected abelian group in an o-minimal structure, and assume that $G$ is generated by a definable subset. Is $G$ a cover of a definable group? Recall that a group is \emph{locally definable} iff both its domain and the graph of its multiplication are countable unions of definable sets. 
 A subset $A$ of a locally definable set $Z$ is \emph{discrete} iff, for every definable $Y\subseteq Z$, the intersection $A\cap Y$ is finite.
In the cited papers it is shown that the answer is positive if and only if the following two conditions hold. 
\begin{enumerate}
    \item\label{point:maxk} There is a maximal $k\in \omega$ such that $G$ has a discrete subgroup isomorphic to ${\mathbb Z}^k$.
    \item\label{point:kne0} If $G$ is not already definable, then $k\neq 0$.
\end{enumerate}

By \cite{berarducci_discrete_2013} condition~\ref{point:maxk} is always satisfied, but  it remains open whether~\ref{point:kne0} holds. This led us to consider the following question. Let $G$ be a locally definable, definably generated abelian group in a first order structure and assume that $G$ is not definable. Does $G$ have a discrete subgroup isomorphic to $\mathbb Z$? Any model of the theory $T_K$ provides a negative answer in the stable context.  

\begin{pr}
In every model of $T_K$, the locally definable group $(\bigcup_{n\in \omega} X^n, +)$ is definably generated, but it contains no discrete copy of $\mathbb Z$.
\end{pr}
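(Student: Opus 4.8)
The plan is to dispatch the (implicit) local definability at once and then treat the two explicit claims separately. Local definability is immediate: the domain $\bigcup_{n\ge 1} X^n$ is a countable union of definable sets, and the graph of $+$ is $\bigcup_{n,m\ge 1}\set{(x,y,z)\mid x\in X^n,\ y\in X^m,\ x+y=z}$, again a countable union of definable sets. For definable generation I take the single definable set $X=X^1$ as generating set: being a union of subspaces, $X$ is closed under scalar multiplication, hence under negation, so the subgroup of $(M,+)$ it generates is exactly the set of finite sums of elements of $X$. By the interpretation of $X^n$ as the $n$-fold sum of $X$ with itself, this subgroup is precisely $\bigcup_{n\ge 1} X^n$, and so $G$ is generated by the definable set $X$.

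For the second claim I argue by contradiction: suppose $H\le G$ is isomorphic to $\mathbb Z$ and discrete, and write $H=\set{ng\mid n\in\mathbb Z}$ for some $g$ of infinite additive order. If $\operatorname{char} K>0$ then no element of $M$ has infinite additive order, so there is nothing to prove; thus I may assume $\operatorname{char} K=0$. Set $k\coloneqq w(g)$ and write $g=\sum_{i<k} m_i$ as the sum of the elements of its support, the $m_i$ being pairwise nonparallel elements of $X$. The point is that the group element $ng$ coincides with the scalar multiple $(n\cdot 1_K)\,g=\sum_{i<k}(n\cdot 1_K)\,m_i$; since each axis is a subspace, $(n\cdot 1_K)\,m_i$ stays on the axis of $m_i$, and it is nonzero for every $n\ne 0$ precisely because $\operatorname{char} K=0$. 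Hence $\axes(ng)=\axes(g)$, so $w(ng)=k$ for all nonzero $n$, and by \Cref{rem:usesKinfinite} every element of $H$ lies in $X^k$ (note $k\ge 1$, as $g\ne 0$).

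To conclude, observe that $X^k$ is a definable subset of $G$, whereas $H\cap X^k=H$ is infinite; this contradicts the discreteness of $H$, and so no such $H$ can exist. The only step requiring care is the identification of the iterated group operation $ng$ with the scalar multiple $(n\cdot 1_K)\,g$, together with the observation that this multiple has full weight for every $n\ne 0$: this is exactly where characteristic zero enters, and it is what forces the entire copy of $\mathbb Z$ to collapse into a single definable set $X^k$. Everything else is routine bookkeeping with supports and the weight characterisation of the $X^n$ from \Cref{rem:usesKinfinite}.
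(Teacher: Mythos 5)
Your proof is correct and follows essentially the same route as the paper's: the key observation in both is that the group multiple $ng$ equals the scalar multiple $(n\cdot 1_K)g$, which stays in the definable set $X^{w(g)}$ because $X$ (hence each $X^n$) is closed under scalar multiplication, so an infinite cyclic subgroup cannot be discrete. Your extra bookkeeping with supports and the appeal to \Cref{rem:usesKinfinite} are not needed (closure of $X^n$ under scalars already gives $\mathbb Z\cdot g\subseteq X^n$), but they do no harm.
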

\begin{proof}
  Clearly $X$ is a definable generating set.
Now, consider a copy of~$\mathbb Z$, namely a subgroup of the form
$\mathbb Z\cdot a$.
Since $a\in\bigcup_{n\in \omega} X^n$, we have $a\in X^n$ for some~$n$. Since $X$ is closed under scalar multiplication, this
implies that $\mathbb Z\cdot a$ is contained in the definable set $X^n$,
and thus it cannot be discrete.
\end{proof}

\end{document}